\documentclass[11pt]{amsart}
\usepackage{amscd,amsmath,amsthm,amssymb}
\usepackage{pstcol,pst-plot,pst-3d}
\usepackage{lmodern,pst-node}
\usepackage{multicol}
\usepackage{epic,eepic}
\usepackage{lineno}
\usepackage{amsfonts,amssymb,amscd,amsmath,enumerate,verbatim}
\psset{unit=0.7cm,linewidth=0.8pt,arrowsize=2.5pt 4}

\newpsstyle{fatline}{linewidth=1.5pt}
\newpsstyle{fyp}{fillstyle=solid,fillcolor=verylight}
\definecolor{verylight}{gray}{0.97}
\definecolor{light}{gray}{0.9}
\definecolor{medium}{gray}{0.85}
\definecolor{dark}{gray}{0.6}
 %
 %
 %
 \def\NZQ{\Bbb}               
 \def\NN{{\NZQ N}}

 \def\PP{{\NZQ P}}

 %
 %

 %
 \def\MI{{\mathcal I}}

\def\MP{{\mathcal P}}

 %

 %
 \def\opn#1#2{\def#1{\operatorname{#2}}} 
 %
 \opn\chara{char} \opn\length{\ell} \opn\pd{pd} \opn\rk{rk}
 \opn\projdim{proj\,dim} \opn\injdim{inj\,dim} \opn\rank{rank}
 \opn\depth{depth} \opn\grade{grade} \opn\height{height}
 \opn\embdim{emb\,dim} \opn\codim{codim}
 
 \opn\Tr{Tr} \opn\bigrank{big\,rank}
 \opn\superheight{superheight}\opn\lcm{lcm}
 \opn\trdeg{tr\,deg}
 \opn\reg{reg} \opn\lreg{lreg} \opn\ini{in} \opn\lpd{lpd}
 \opn\size{size} \opn\sdepth{sdepth}
 \opn\link{link}\opn\fdepth{fdepth}\opn\lex{lex}
 %
 \opn\div{div} \opn\Div{Div} \opn\cl{cl} \opn\Cl{Cl}
 %
 %
 \opn\Spec{Spec} \opn\Supp{Supp} \opn\supp{supp} \opn\Sing{Sing}
 \opn\Ass{Ass} \opn\Min{Min}\opn\Mon{Mon}
 %
 %
 \opn\Ann{Ann} \opn\Rad{Rad} \opn\Soc{Soc}
 %
 %
 \opn\Im{Im} \opn\Ker{Ker} \opn\Coker{Coker} \opn\Am{Am}
 \opn\Hom{Hom} \opn\Tor{Tor} \opn\Ext{Ext} \opn\End{End}
 \opn\Aut{Aut} \opn\id{id}
 
 \opn\nat{nat}
 \opn\pff{pf}
 \opn\Pf{Pf} \opn\GL{GL} \opn\SL{SL} \opn\mod{mod} \opn\ord{ord}
 \opn\Gin{Gin} \opn\Hilb{Hilb}\opn\sort{sort}\opn\Syz{Syz}
 %
 %
 \opn\aff{aff} \opn
 \con{conv} \opn\relint{relint} \opn\st{st}
 \opn\lk{lk} \opn\cn{cn} \opn\core{core} \opn\vol{vol}  \opn\inp{inp} \opn\nilpot{nilpot}
 \opn\link{link} \opn\star{star}\opn\lex{lex}\opn\set{set} \opn\rev{rev} \opn\ini{in}
 \opn\gr{gr}

 %
 %
 
 \def\pot#1#2{#1[\kern-0.28ex[#2]\kern-0.28ex]}

 %
 %
 \opn\dirlim{\underrightarrow{\lim}}
 \opn\inivlim{\underleftarrow{\lim}}
 %
 %
 %

 %
 %

 \def\Implies{\ifmmode\Longrightarrow \else
         \unskip${}\Longrightarrow{}$\ignorespaces\fi}
 \def\implies{\ifmmode\Rightarrow \else
         \unskip${}\Rightarrow{}$\ignorespaces\fi}
 \def\iff{\ifmmode\Longleftrightarrow \else
         \unskip${}\Longleftrightarrow{}$\ignorespaces\fi}

 \let\:=\colon
 \newtheorem{Theorem}{Theorem}[section]
 
 \newtheorem{Corollary}[Theorem]{Corollary}
 
 \newtheorem{Remark}[Theorem]{Remark}
 
 \newtheorem{Example}[Theorem]{Example}
 
 \newtheorem{Definition}[Theorem]{Definition}

 %
 %
 \let\epsilon\varepsilon
 \let\kappa=\varkappa
 %
 %
 \textwidth=15cm \textheight=22cm \topmargin=0.5cm
 \oddsidemargin=0.5cm \evensidemargin=0.5cm \pagestyle{plain}
 %
 %
 \def\qed{\ifhmode\textqed\fi
       \ifmmode\ifinner\quad\qedsymbol\else\dispqed\fi\fi}
 \def\textqed{\unskip\nobreak\penalty50
        \hskip2em\hbox{}\nobreak\hfil\qedsymbol
        \parfillskip=0pt \finalhyphendemerits=0}
 \def\dispqed{\rlap{\qquad\qedsymbol}}
 
 %
 \opn\dis{dis}
 \def\pnt{{\raise0.5mm\hbox{\large\bf.}}}
 
 \opn\Lex{Lex}

 
 

 \begin{document}

 \title {Rank bounded Hibi subrings for planar distributive lattices}

 \author {Rida Irfan, Nadia Shoukat}

\address{Nadia Shoukat, Abdus Salam School of Mathematical Sciences, GC University,
Lahore. 68-B, New Muslim Town, Lahore 54600, Pakistan} \email{nadiashoukat85@yahoo.com}
\address{ Rida Irfan, Department of Mathematics, COMSATS University Islamabad, Sahiwal Campus, Pakistan} \email{ridairfan@cuisahiwal.edu.pk}

\thanks{The first author was supported by the Higher Education Commission of Pakistan and the Abdus Salam School of Mathematical Sciences, Lahore, Pakistan. The authors are deeply grateful to Prof. Viviana Ene for her supervision to accomplish this work.}

\begin{abstract}
Let $L$ be a distributive lattice and $R[L]$ the associated Hibi ring. We show that if $L$ is  planar, then any bounded Hibi subring of $R[L]$ has a quadratic  Gr\"obner basis.   We characterize all planar distributive lattices $L$ for which any proper rank bounded Hibi subring of $R[L]$ has a linear resolution. Moreover, if $R[L]$ is linearly related for a lattice $L$, we find all the rank bounded Hibi subrings of $R[L]$ which are linearly related too.
\end{abstract}

\subjclass[2010]{13D02, 13C05, 05E40, 13P10}

\keywords{Rank bounded Hibi Subrings, Linear resolution, Linear syzygies}

\maketitle

\section{Introduction}

Hibi rings and their defining ideals are attached in a natural way to finite distributive lattices. They were introduced by Hibi in
\cite{Hibi}.

Let $L$ be a finite distributive lattice and let $P$ be the subposet of $L$ which consists of the join-irreducible elements of $L.$ Then,
by a famous theorem of Birkhoff \cite{B}, it follows that $L$ is isomorphic to the lattice  $\MI(P)$ of the poset ideals of
$P.$ We recall that an element $a\in L$ is called join-irreducible if $a$ is not the minimal element of $L$ and if $a=b\vee c$ where  $b,c
\in L$, then $a=b$ or $a=c,$ in other words, $a$ does not admit a proper decomposition as a join of elements of $L.$ Let us also recall
the definition of the poset ideal. A subset $\alpha\subseteq P$ is called a poset ideal if it has the following property: for any $a\in \alpha $
and $ b\in P, $ if $b\leq a,$ then $b\in \alpha.$ For a comprehensive study of finite lattices we refer to \cite{B,Sta1}.

Let us assume that $P$ consists of $n$ elements, say $P=\{p_1,\ldots,p_n\}$. Let $A=K[t,x_1,\ldots,x_n]$ be the polynomial ring in $n+1$
variables over a field $K.$   The Hibi ring of $L=\MI(P)$ is the $K$--subalgebra $R[L]$ of $A$ generated over $K$ by the monomials
$u_{\alpha}=t\prod_{p_i\in \alpha}x_i,$ with $\alpha\in L.$ In \cite{Hibi}, Hibi showed that $R[L]$ is an algebra with straightening laws
on $L$ over $K$ (ASL, in brief). For an extensive survey on ASL, we refer the reader to \cite{Eis}. Let $B=K[\{x_\alpha: \alpha \in L\}]$
be the polynomial ring in the indeterminates indexed by the elements of $L.$ Then, the defining ideal $I_L\subset B$ of the toric ring $R[L]$ is generated
by the straightening relations, namely
$$I_L=(x_\alpha x_\beta-x_{\alpha\cap \beta}x_{\alpha\cup \beta}: \alpha,\beta \in L, \alpha,\beta \text{ incomparable}).$$
$I_L$ is called the Hibi ideal  or the join-meet ideal of $L.$

In the same paper \cite{Hibi}, Hibi showed that $R[L]$ is a Cohen-Macaulay normal domain of $\dim R[L]=|P|+1.$ In the last decades, many authors have investigated various properties and invariants of Hibi rings; see, for example, \cite{AHH,EHH,EHHSara,EHSara,ERQ,HHNach,Q2}. Generalizations of Hibi rings are studied in the papers \cite{Hetal, EHM}.

Much less is known about the so-called rank bounded Hibi subrings introduced in \cite{AHH}.

Let $L$ be a finite distributive lattice and let $p,q$ be integers such that $0\leq p<q\leq \rank L.$ The rank bounded Hibi subring
$R[p,q;L]\subseteq R[L]$ is the $K$--subalgebra of $R[L]$ generated over $K$ by all the monomials $u_{\alpha}=t\prod_{p_i\in \alpha}x_i,$ with $\alpha \in L$ and $p\leq \rank \alpha \leq q.$ In \cite{AHH}, it was shown that if a Hibi ring $R[L]$ possesses a quadratic Gr\"obner basis with respect to a rank lexicographic order, then all the rank bounded Hibi subrings have the same property.

In this paper, we study rank bounded Hibi subrings for planar distributive lattices.
 The paper is organized as follows. In Section~\ref{one}, we show that any rank bounded Hibi subring of $R[L]$, where $L$ is a planar
distributive lattice, has a quadratic Gr\"obner basis. In order to prove this theorem, we interpret $R[p,q;L]$ as an edge ring of a suitable
bipartite graph which has only cycles of length $4.$ As a consequence, we derive that $R[p,q;L]$ is a Cohen-Macaulay normal domain.

In Section~\ref{two}, we study several homological properties of rank bounded Hibi subrings. In Theorem~\ref{linear}, we characterize the planar distributive lattices $L$ with the property that every proper rank bounded Hibi subring of $R[L]$ has a linear resolution. In particular, we see that if $R[L]$ has a linear resolution, then every rank bounded Hibi subring of $R[L]$ has a linear resolution. As it follows from Example~\ref{notlinearrel}, if $R[L]$ is linearly related then it does not necessarily follow that any rank bounded Hibi subring of $R[L]$ inherits the same property. However, given a lattice $L$ such that $R[L]$ is linearly related, we may find all the rank bounded Hibi subrings of $R[L]$ which are linearly related too; see Theorem~\ref{linrelth}.

\section{The Gr\"obner basis}
\label{one}

Let $\NN^2$ be the infinite distributive lattice with the partial order defined as follows: $(i,j)\leq (k,\ell)$ if $i\leq k$ and $j
\leq\ell.$ A planar distributive lattice $L$ is a finite sublattice of $\NN^2$ which contains $(0,0)$ and has the following property: if $(i,j)
, (k,\ell)\in L$ and $(i,j)<(k,\ell)$, then there exists a chain of elements in $L:$ $$(i,j)=(i_0,j_0)<(i_1,j_1)<\cdots < (i_t,j_t)=(k,
\ell)$$
such that $i_{s+1}+j_{s+1}=i_s+j_s+1$ for all $0\leq s\leq t-1.$
If $a,b \in \NN^2$ with $a \leq b$,
then the set $[a,b]= \{ c \in \NN^2|\; a \leq c \leq b\}$ is an interval of $\NN^2$. The interval $C=[a,b]$ with $b=a+(1,1)$ is called a  cell  of $\NN^2$. Any planar distributive lattice may be viewed as a convex polyomino, as it was observed  in \cite{EHH}. For more information about polyominoes and their ideals we refer to \cite{Q,EHH}.

In what follows, we consider only simple planar distributive lattices, that is, lattices $L$ with the property that, for any $0<\ell<\rank L,$ there exist at least two elements of $L$ of rank $\ell.$

\begin{Definition}
Let $L$ be a planar distributive lattice and let $p,q$ be integers such that $0\leq p<q\leq \rank L.$
 The $K$--subalgebra $R[p,q;L]$ of
$R[L]$ generated by all the monomials $u_\alpha$ with $p\leq \rank \alpha\leq q$ is called a rank bounded Hibi subring of $R[L].$
\end{Definition}

If $p=0,$ we call $R[p,q;L] $ a rank upper-bounded Hibi subring. Similarly, if $q=\rank L,$ we call $R[p,q;L] $ a rank lower-bounded Hibi subring.

In this section, we show that any rank bounded Hibi subring has a quadratic Gr\"obner basis.

Let $L$ be a planar distributive lattice. The elements of $L$ are lattice points $(i,j)$ in the plane with $0\leq i\leq m$ and $0\leq j\leq n$ for some
positive integers $m,n$ with $m+n=\rank L.$ Then, the Hibi ring $R[L]$ may be viewed as the edge ring of a bipartite graph $G_L$ which admits the vertex bipartition $\{s_1,\ldots,s_m\}\cup \{t_1,\ldots,t_n\}$ and whose edge set is $E(G_L)=\{\{s_i,t_j\}: (i,j)\in L\}.$ Note that the generator $s_it_j$ of the edge ring corresponds to an element in $L$ whose rank is $i+j.$ Let
$0\leq p<q\leq \rank L$ and $R[p,q;L]$ a rank bounded Hibi subring of $R[L].$ Then $R[p,q;L]$ coincides with the subring of the edge ring
$K[\{s_it_j: 0\leq i\leq m, 0\leq j\leq n\}]$ which is generated by all the monomials $s_it_j$ with $p\leq i+j\leq q.$

\begin{Example} {\em
In Figure~\ref{forproof} we have a lattice of rank $5+4=9$ whose elements are the lattice points contained in the polygon bounded by the fat polygonal line. The fat points in the figure correspond to the generators of the  subring of $R[L]$ bounded by $p=3$ and $q=7.$
The bounded subring of $R[L]$ has $14$ generators as an algebra over $K$.
}
\end{Example}

\begin{figure}[]
\begin{center}
\psset{unit=0.6cm}
\begin{pspicture}(0,0)(5,4)

\pspolygon(0,0)(5,0)(5,4)(0,4)
\psline(0,1)(5,1)
\psline(0,2)(5,2)
\psline(0,3)(5,3)
\psline(1,0)(1,4)
\psline(2,0)(2,4)
\psline(3,0)(3,4)
\psline(4,0)(4,4)

\rput(3,0){$\bullet$}
\rput(2,1){$\bullet$}
\rput(1,2){$\bullet$}
\rput(3,1){$\bullet$}
\rput(1,3){$\bullet$}
\rput(3,2){$\bullet$}
\rput(2,3){$\bullet$}
\rput(4,2){$\bullet$}
\rput(2,1){$\bullet$}
\rput(2,2){$\bullet$}
\rput(3,3){$\bullet$}
\rput(3,3){$\bullet$}
\rput(2,4){$\bullet$}
\rput(5,2){$\bullet$}
\rput(4,3){$\bullet$}
\rput(3,4){$\bullet$}
\psline[linestyle=dashed](1,2)(3,0)
\psline[linestyle=dashed](3,4)(5,2)
\pspolygon[linewidth=1.8pt](0,0)(3,0)(3,2)(5,2)(5,4)(2,4)(2,3)(1,3)(1,2)(0,2)
\rput(-0.6,-0.1){$(0,0)$}
\rput(-0.6,4.1){$(0,4)$}
\rput(5.6,-0.1){$(5,0)$}
\rput(5.6,4.1){$(5,4)$}
\end{pspicture}
\end{center}
\caption{Representation of a rank bounded Hibi subring}
\label{forproof}
\end{figure}
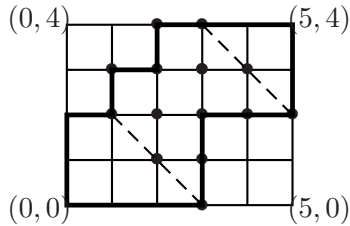

\begin{Theorem}\label{GB} Let $L$ be a planar distributive lattice and $p,q$ integers with $0\leq p<q\leq \rank L.$
The defining ideal of the rank bounded Hibi subring $R[p,q;L]$ has a quadratic Gr\"obner basis.
\end{Theorem}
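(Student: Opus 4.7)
The plan is to realise $R[p,q;L]$ as the edge ring of a bipartite graph and then apply the well-known criterion of Ohsugi--Hibi: for a bipartite graph $H$, the toric ideal $I_H$ admits a (squarefree) quadratic Gr\"obner basis provided every cycle of $H$ of length at least $6$ has a chord. Put $L_{p,q}=\{(i,j)\in L:p\le i+j\le q\}$, and let $H$ be the bipartite graph on $\{s_0,\dots,s_m\}\cup\{t_0,\dots,t_n\}$ whose edges are $\{\{s_i,t_j\}:(i,j)\in L_{p,q}\}$. As recalled immediately before the theorem, $R[p,q;L]=K[H]$, so the problem reduces to checking the chord condition for $H$.

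The crucial structural input is that, by the chain property defining a planar distributive lattice, $L$ is row- and column-convex: whenever $(i,j'),(i,j'')\in L$, every $(i,j)$ with $j$ between $j'$ and $j''$ also lies in $L$, and symmetrically for columns (this follows because any saturated chain from $(i,j')$ to $(i,j'')$ can only increase the second coordinate by one at each step). Intersecting with the diagonal slab $p\le i+j\le q$ preserves this property: if $(i,j'),(i,j'')\in L_{p,q}$, then every such intermediate $(i,j)$ has rank $i+j$ sandwiched between $i+j'$ and $i+j''$, hence lies in $L_{p,q}$ itself. The analogous statement for columns also holds.

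With this in hand the chord condition is easy. Take any cycle $C=s_{i_1}t_{j_1}\cdots s_{i_k}t_{j_k}s_{i_1}$ in $H$ of length $2k\ge 6$, so the $i_a$'s and the $j_a$'s are each pairwise distinct. Let $j_c=\max_a j_a$ and let $j^\star$ be the second-largest value of $\{j_1,\dots,j_k\}$. The two cyclic neighbours $j_{c-1},j_{c+1}$ of $j_c$ are distinct and both strictly less than $j_c$, so at most one of them equals $j^\star$; say $j_{c+1}\neq j^\star$. Then $j^\star=j_b$ for some $b\notin\{c,c+1\}$, and $j_{c+1}<j_b<j_c$. Applying the row-convexity of $L_{p,q}$ in row $i_{c+1}$, starting from the cycle edges $(i_{c+1},j_c),(i_{c+1},j_{c+1})\in L_{p,q}$, one obtains $(i_{c+1},j_b)\in L_{p,q}$. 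Thus $\{s_{i_{c+1}},t_{j_b\}}$ is an edge of $H$ and, since $b\notin\{c,c+1\}$, it is a chord of $C$.

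The main obstacle I anticipate is not the extremal argument itself but guaranteeing that the chord produced by the planarity of $L$ still respects the rank bound $p\le i+j\le q$; this is exactly what the slab-convexity observation of the second paragraph arranges. Once the chord condition is verified, the Ohsugi--Hibi criterion delivers a quadratic Gr\"obner basis for $I_H$, equivalently for the defining ideal of $R[p,q;L]$, completing the proof.
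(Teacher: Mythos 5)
Your proof is correct and follows essentially the same route as the paper: identify $R[p,q;L]$ with the edge ring of a bipartite graph, invoke the Ohsugi--Hibi chordality criterion, and observe that the rank of an intermediate lattice point is sandwiched between the ranks of two cycle vertices, so the chord guaranteed by the convexity of $L$ survives the restriction $p\le i+j\le q$. The only cosmetic difference is that you prove the existence of the sandwiched index by a direct extremal argument (taking the maximal $j_a$), where the paper cites \cite[Lemma 2.2]{EHH}, and you make explicit the convexity of $L$ that the paper imports by reference to the proof of \cite[Theorem 2.1]{EHH}.
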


\begin{proof}
As we have already observed,  $R[p,q;L]$ may be identified with the subring $K[\{s_it_j: (i,j)\in L,  p\leq i+j\leq q\}]$ of the edge ring
$R[L]=K[\{s_it_j: (i,j)\in L\}].$ In \cite{OH}, it was shown that an edge ring of a bipartite graph $G$ has a
quadratic Gr\"obner basis (with respect to a suitable monomial order) if and only if every cycle of $G$ of length $>4$ has a chord.
We follow the ideas of the proof of \cite[Theorem 2.1]{EHH}. Let $G$ be the bipartite graph with edges $\{s_i,t_j\}$, where $p\leq i+j\leq q.$ An even cycle of length $2r$ in $G$ is a sequence of edges of $G$ which correspond to a sequence of lattice points in the plane,  say $a_1,\ldots,a_{2r},$ with $a_{2k-1}=(i_k,j_k)$ and
$a_{2k}=(i_{k+1},j_k)$ for $k=1,\ldots,r$, where $i_{r+1}=i_1,$ $i_k\neq i_{\ell}$ and $j_k\neq j_{\ell}$ for $k\neq \ell, k,\ell\leq r.$
By \cite[Lemma 2.2]{EHH}, it follows that there exist integers $c,d$ with $1\leq c,d\leq r,$ $d\neq c,c+1$ such that either
$i_c<i_d<i_{c+1}$ or $i_{c+1}<i_d<i_c.$ Let us choose $i_c<i_d<i_{c+1}$. The other case may be discussed similarly. Since
$a_{2c-1}=(i_c,j_c)$ and $a_{2c}=(i_{c+1},j_c)$ correspond to edges of $G,$ we have: $p\leq i_{c}+j_c< i_{c+1}+j_c\leq q.$ As $i_c<i_d<i_{c+1}$,
it follows that $p\leq i_d+j_c\leq q$ which implies that $(i_d,j_c)$ corresponds to a chord in our cycle of $G.$
\end{proof}

\section{Properties of rank bounded Hibi subrings}
\label{two}

Theorem~\ref{GB} and its proof have important consequences. In fact, in view of the theorem proved in \cite{OH}, the proof of Theorem~\ref{GB} shows that the binomials of the quadratic Gr\"obner basis of the defining ideal of $R[p,q;L]$ are differences of squarefree monomials of degree $2.$ This immediately implies the following.

\begin{Corollary}
Let $L$ be a planar distributive lattice and $p,q$ integers with $0\leq p<q\leq \rank L.$ Then $R[p,q;L]$ is a normal Cohen-Macaulay domain.
\end{Corollary}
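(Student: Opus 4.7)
The plan is to leverage Theorem~\ref{GB} together with two classical results in combinatorial commutative algebra. First, I would observe, as the paragraph preceding the corollary already notes, that the proof of Theorem~\ref{GB} produces not merely a quadratic Gr\"obner basis but one whose elements are differences of squarefree quadratic monomials $s_is_{i'}t_jt_{j'} - s_it_{j'}s_{i'}t_j$ (coming from the chord interpretation in the bipartite graph). Consequently, the initial ideal $\ini_<(I)$ of the defining ideal $I$ of $R[p,q;L]$ with respect to the chosen term order is generated by squarefree monomials of degree $2$.

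Next, the plan is to invoke Sturmfels' theorem (see Proposition 13.15 in \emph{Gr\"obner Bases and Convex Polytopes}): a toric ideal admitting an initial ideal generated by squarefree monomials defines a normal toric ring. Since $R[p,q;L]$ is by construction a toric ring (a $K$--subalgebra of $R[L]$ generated by monomials), this immediately yields that $R[p,q;L]$ is a normal domain.

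Finally, once normality is established, Cohen--Macaulayness is automatic by Hochster's celebrated theorem that every normal affine semigroup ring is Cohen--Macaulay. Putting these three ingredients together gives the corollary.

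I do not anticipate any real obstacle: all of the substantive work has already been carried out inside Theorem~\ref{GB}, and both the Sturmfels and Hochster results are off-the-shelf. The only point deserving a line of care is checking that the Gr\"obner basis constructed in the proof of Theorem~\ref{GB} really consists of squarefree quadratic binomials, which is clear from the form of the chord $(i_d,j_c)$: the relation produced from an even cycle of length $4$ with vertices $s_i,t_j,s_{i'},t_{j'}$ is exactly $s_it_j\cdot s_{i'}t_{j'}-s_it_{j'}\cdot s_{i'}t_j$, squarefree of degree~$2$.
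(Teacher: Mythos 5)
Your proposal is correct and follows essentially the same route as the paper: the squarefree quadratic initial ideal from Theorem~\ref{GB}, Sturmfels' theorem for normality, and Hochster's theorem for Cohen--Macaulayness. The only cosmetic remark is that squarefreeness should be read in the presentation variables $y_{ij}$ of the defining ideal (where the binomials are $y_{ij}y_{k\ell}-y_{i\ell}y_{kj}$) rather than in the edge-ring generators $s_it_j$, but this does not affect the argument.
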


\begin{proof}
By a theorem of Sturmfels \cite{St95}, since the defining ideal of $R[p,q;L]$ has a squarefree initial ideal, it follows that
$R[p,q;L]$ is a normal domain. The Cohen-Macaulay property follows from a classical  theorem of Hochster \cite{Ho72}.
\end{proof}

\begin{Remark}{\em
In \cite[Section 2]{AHH} it was shown that if $L$ is a chain ladder, then $R[L]$ and any bounded subring of $R[L]$ have a lexicographic
quadratic Gr\"obner basis. Our Theorem~\ref{GB} does not impose any additional  condition on the planar distributive lattice $L$ to derive that any rank bounded subring of $R[L]$ has a quadratic Gr\"obner basis.
}
\end{Remark}

\begin{Remark}{\em
Let $L$ be a planar distributive lattice, $p,q$ integers with $0\leq p<q\leq \rank L,$ and $T=K[\{y_{ij}: (i,j)\in L, p\leq i+j\leq q\}]$ the polynomial ring over $K.$ By the proof of Theorem~\ref{GB}, the defining ideal of $R[p,q;L]$ is the binomial ideal of $T$ generated
by the quadratic binomials $y_{ij}y_{k\ell}-y_{i\ell}y_{kj}$, where $(i,j),(k,\ell)\in L$, $p\leq i+j, k+\ell\leq q$ and $p\leq i+\ell, j+k\leq q.$

On the other hand, let us observe that one may consider the collection $\MP$ of all the cells $[a,a+(1,1)]\subset L$ with $a=(i,j)\in L, a+(1,1)\in L,$ and such that $p\leq i+j<i+j+2\leq q.$ This is obviously a convex polyomino. Indeed $\MP$ is row convex and column convex. We give only the argument for row convexity since column convexity works similarly. Let $[a,a+(1,1)], [b,b+(1,1)]\in \MP,$ where $a=(i,j), b=(k,j), i<k.$
Then, if $[c,c+(1,1)]\in \NN^2$ is a cell with $c=(\ell,j)$ where $i\leq \ell\leq k, $ then we have
\[
p\leq i+j\leq \ell+j<\ell+j+2\leq k+j+2\leq q.
\]
This shows that $[c,c+(1,1)]\in \MP.$

Let $T^\prime\subset T$ be the polynomial ring in the variables $y_{ij}$, where $(i,j)$ is a vertex of $\MP.$ Then, according to the proof of \cite[Theorem 2.1]{EHH}, the polyomino ideal $I_{\MP}$ is generated by the quadratic binomials $y_{ij}y_{k\ell}-y_{i\ell}y_{kj}$, where  $(i,j),(k,\ell)\in L,$  $p\leq i+j, k+\ell\leq q$ and $p\leq i+\ell, j+k\leq q.$ Thus, the defining ideal of $R[p,q;L]$ is nothing else but $I_{\MP}T.$

This simple observation will be very useful in our further study.
}
\end{Remark}

Theorem~\ref{GB} has another obvious consequence.

\begin{Corollary}
Let $L$ be a planar distributive lattice and $p,q$ integers with $0\leq p<q\leq \rank L.$ Then $R[p,q;L]$ is Koszul.
\end{Corollary}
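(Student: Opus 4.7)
The plan is to derive Koszulness as an immediate consequence of Theorem~\ref{GB} through the standard link between quadratic Gr\"obner bases and the Koszul property. Recall that a standard graded $K$-algebra $A$ is Koszul if $K$, viewed as an $A$-module via the augmentation, admits a linear minimal graded free resolution. A classical criterion, going back to work of Fr\"oberg, asserts that if the defining ideal $I$ of $A$ in a polynomial ring $T$ admits a quadratic Gr\"obner basis with respect to some term order, then $A = T/I$ is Koszul.

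Concretely, I would proceed as follows. Let $T = K[\{y_{ij}: (i,j)\in L,\ p\leq i+j\leq q\}]$ and let $I\subset T$ be the defining ideal of $R[p,q;L]$. By Theorem~\ref{GB}, $I$ possesses a quadratic Gr\"obner basis with respect to a suitable monomial order, so the initial ideal $\ini(I)$ is a quadratic monomial ideal. By Fr\"oberg's theorem, the quotient of a polynomial ring by a quadratic monomial ideal is Koszul, hence $T/\ini(I)$ is Koszul. Since the graded Betti numbers of $T/I$ are bounded above by those of $T/\ini(I)$ while both algebras share the same Hilbert function, the Koszul property transfers from $T/\ini(I)$ to $T/I = R[p,q;L]$.

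There is no real technical obstacle; the only item worth checking in passing is that the Gr\"obner basis produced in the proof of Theorem~\ref{GB} consists of homogeneous degree-two elements, which is clear since the generating binomials are of the form $y_{ij}y_{k\ell}-y_{i\ell}y_{kj}$. Thus the corollary follows in one line from Theorem~\ref{GB} together with the standard Koszulness criterion.
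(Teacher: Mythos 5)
Your argument is correct and follows essentially the same route as the paper: the corollary is deduced directly from Theorem~\ref{GB} via the classical fact that a standard graded algebra whose defining ideal has a quadratic Gr\"obner basis is Koszul (the paper simply cites this criterion rather than sketching the Fr\"oberg/deformation argument as you do). No gaps; the extra detail you supply is just the standard proof of the cited criterion.
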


\begin{proof}
It is a classical result that a standard graded $K$--algebra is Koszul if its defining ideal has a quadratic Gr\"obner basis. For a proof we refer to \cite[Theorem 6.7]{EH}.
\end{proof}

\begin{Remark}{\em
Let $\MP$ be the collection of all the cells $[a,a+(1,1)]\subset L$ with $a=(i,j)\in L, a+(1,1)\in L,$ and such that $p\leq i+j<i+j+2\leq q.$ Then
$$\dim R[p,q;L]=(\text{number of vertices} ~(i,j) ~\text{in}~ L ~\text{such that} ~p\leq i+j\leq q)-\text{number of cells in}~ \MP.$$

This fact is the direct consequence of \cite[Corollary 2.3]{Q}.
}
\end{Remark}

In what follows, we are interested in relating some   homological properties of rank bounded Hibi subrings to the corresponding properties of the Hibi ring.

Let $L$ be a planar distributive lattice and let $I_L\subset K[\{y_{ij}: (i,j)\in L\}]$ be the defining ideal of $R[L].$ For any
rank bounded Hibi subring $R[p,q;L]$ of $R[L]$, we denote by $I^{p,q}_L$ the defining ideal of $R[p,q;L]$ which is contained in the polynomial ring $K[\{y_{ij}:(i,j)\in L, p\leq i+j\leq q\}]$.

\begin{Theorem}\label{linear}
Let $L$ be a planar distributive lattice. Then the defining ideal of any proper rank bounded Hibi subring of $R[L]$ has a linear resolution if and only if one of the following conditions holds:
\begin{itemize}
	\item [(i)] $I_L$ has a linear resolution, that is, $L=\MI(P)$ where $P$ is the direct sum of a chain and an isolated
element.
\item [(ii)] $L=\MI(P)$ where $P$ is the poset $p_1<p_2, q_1<q_2, q_1<p_2$.
\item [(iii)] $L=\MI(P)$ where $P$ is the poset $p_1<p_2, q_1<q_2, p_1<q_2$.
\end{itemize}
\end{Theorem}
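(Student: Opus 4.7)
The plan is to translate the problem to polyomino combinatorics via the remark immediately preceding the theorem: $I^{p,q}_L=I_{\MP_{p,q}}T$, where $\MP_{p,q}$ is the convex polyomino whose cells are those $[(i,j),(i+1,j+1)]\subseteq L$ with $p\leq i+j\leq q-2$. Since polynomial-ring extension preserves graded Betti numbers, $I^{p,q}_L$ is $2$-linear if and only if $I_{\MP_{p,q}}$ is. The key ingredient I would invoke is the characterization of convex polyominoes with linearly resolved ideals: for a convex polyomino $\MP$, $I_{\MP}$ has a $2$-linear resolution if and only if all cells of $\MP$ lie in a common row or a common column (equivalently, $\MP$ contains no $3$-cell L-shape). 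Sufficiency reduces to the classical $2$-linear Eagon--Northcott resolution of $I_2$ of a $2\times k$ matrix. The obstruction on the necessary side is the $3$-cell L-shape polyomino, whose ideal is, up to relabelling, the Hibi ideal of the $8$-element lattice of case (ii) (or (iii)); this Hibi ideal is not $2$-linear by the Ene--Herzog--Hibi classification, as its underlying poset is not a chain plus an isolated point. Non-linearity of the resolution then propagates from the L-shape to any larger convex polyomino containing it.

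\emph{Forward direction.} In case (i), $L$ is a $2\times n$ rectangle, so every cell of $L$ lies in the single row $i=0$ and each $\MP_{p,q}$ is automatically a horizontal strip, whose ideal is $2$-linear. In cases (ii) and (iii), $\rank L=4$ and $L$ has only three cells; an inspection of the finitely many pairs $(p,q)$ with $0\leq p<q\leq 4$ and $(p,q)\neq(0,4)$ shows that each $\MP_{p,q}$ is empty, a single cell, or a two-cell horizontal or vertical strip, so the corresponding ideal is $2$-linear.

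\emph{Reverse direction.} Let $L$ be a simple planar distributive lattice outside the three listed families, with maximum $(m,n)$. Exclusion of (i) forces $m,n\geq 2$, and simplicity together with the sublattice property gives $(1,0),(0,1),(1,1)\in L$, so $[(0,0),(1,1)]$ is always a cell of $L$. The natural candidate is $(p,q)=(0,3)$, proper because $\rank L\geq 4$: $\MP_{0,3}$ always contains $[(0,0),(1,1)]$, and additionally $[(1,0),(2,1)]$ (resp.\ $[(0,1),(1,2)]$) whenever $(2,0)\in L$ (resp.\ $(0,2)\in L$). If both auxiliary cells are in $L$, $\MP_{0,3}$ is a $3$-cell L-shape and we are done. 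If, say, $(2,0)\notin L$, then the exclusion of (ii) and (iii) forces $\rank L\geq 5$ and one slides to a higher window: either the symmetric choice $(p,q)=(\rank L-3,\rank L)$ using the analogous cells near the maximum of $L$, or, in the ``ladder'' situation (each intermediate rank having exactly two elements), a window $(p,q)=(\ell,\ell+3)$ straddling a shift of the ladder. In each case three non-collinear cells are captured in $\MP_{p,q}$, yielding the required L-shape.

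The principal obstacle is the reverse direction: the combinatorial bookkeeping required to show that every simple planar distributive lattice outside (i)--(iii) admits some proper rank window exhibiting a $3$-cell L-shape in $\MP_{p,q}$. A secondary but essential step is the polyomino characterization underpinning the whole argument, in particular the propagation of non-linearity from the $3$-cell L-shape (where it reduces to the small Hibi ring of the $8$-element lattice) to any convex polyomino containing it.
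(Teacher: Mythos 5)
Your overall strategy---passing to the polyomino $\MP_{p,q}$ via the remark preceding the theorem and invoking the characterization of \cite[Theorem 4.1]{EHH} (for a convex polyomino $\MP$, $I_{\MP}$ has a linear resolution if and only if its cells form a single row or a single column)---is exactly the paper's, and your forward direction is correct. The gap is in the reverse direction: the windows you propose are too narrow to detect the obstruction. A window $(\ell,\ell+3)$ captures only the cells whose lower-left corner sum is $\ell$ or $\ell+1$, i.e.\ two consecutive antidiagonals of cells, and when $L$ is ``thin'' (one cell per antidiagonal) such a window can never contain three non-collinear cells. Concretely, take $L=[(0,0),(2,3)]\setminus\{(0,2),(0,3)\}$: this is a simple planar distributive lattice of rank $5$ outside families (i)--(iii); its cells are $[(0,0),(1,1)]$, $[(1,0),(2,1)]$, $[(1,1),(2,2)]$, $[(1,2),(2,3)]$, with corner sums $0,1,2,3$. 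Every one of your windows---$(0,3)$, $(\rank L-3,\rank L)=(2,5)$, and each $(\ell,\ell+3)$---yields two cells forming a horizontal or vertical domino, hence a linear resolution; so does $(1,5)$, which gives a vertical column of three cells. The only proper window witnessing failure of linearity is $(0,4)=(0,\rank L-1)$, producing the L-tromino $\{[(0,0),(1,1)],[(1,0),(2,1)],[(1,1),(2,2)]\}$. A second example, $L=[(0,0),(m,1)]\cup\{(m-1,2),(m,2)\}$ with $m\geq 3$, is detected only by the window $(1,\rank L)$. Hence no family of width-$3$ windows suffices, and your claim that ``three non-collinear cells are captured'' fails.

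This is precisely why the paper argues with the two maximal proper windows $(0,\rank L-1)$ and $(1,\rank L)$: assuming $\MP_{0,\rank L-1}$ is a single row (or column) of cells starting at the origin, one deduces (using simplicity, which guarantees a cell on every antidiagonal up to $\rank L-2$) that $L$ consists of that row of $s$ cells together with a unique top cell, which is either the continuation of the row (case (i)) or the cell $[(s-1,1),(s,2)]$ bent upward; in the latter situation the window $(1,\rank L)$ forces $s=2$, leaving exactly cases (ii) and (iii). To repair your argument you would have to replace the sliding width-$3$ windows by this pair of wide windows (or otherwise show that linearity of all proper windows forces $L$ into the three listed shapes). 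A secondary point: you should simply cite \cite[Theorem 4.1]{EHH} for the row/column characterization, since the ``propagation of non-linearity from a sub-polyomino to any convex polyomino containing it'' that you sketch is not an immediate fact for these binomial ideals and is not needed once the cited theorem is in hand.
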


\begin{proof}
Let $L$ be a planar distributive lattice. Suppose that the defining ideal of any proper rank bounded Hibi subring of $R[L]$ has a linear resolution. In Remark $3.3$, it is shown that the defining ideal of any rank bounded Hibi subring is nothing else but the ideal $I_{\MP}$, where $\MP$ is a convex polyomino. We employ here \cite[Theorem 4.1]{EHH} which states that, for a convex polyomino $\MP$, $I_{\MP}$ has a linear resolution if and only if $\MP$ consists of either a row of cells or a column of cells, that is, $\MP$ is of one of the forms displayed in Figure~\ref{Polylinres}:

\begin{figure}[hbt]
\begin{center}
\psset{unit=0.4cm}
\begin{pspicture}(0,-1)(9,3)

\pspolygon(0,0)(2,0)(2,1)(0,1)
\pspolygon(3,0)(4,0)(4,1)(3,1)
\psline(1,0)(1,1)
\psline[linestyle=dashed](2,0)(3,0)
\psline[linestyle=dashed](2,1)(3,1)
\pspolygon(8,-1)(9,-1)(9,0)(8,0)
\pspolygon(8,1)(9,1)(9,3)(8,3)
\psline(8,2)(9,2)
\psline[linestyle=dashed](8,0)(8,1)
\psline[linestyle=dashed](9,0)(9,1)
\end{pspicture}
\end{center}
\caption{$I_\PP$ linearly related}
\label{Polylinres}
\end{figure}

Let $r=\text{rank}~L,$ $q=r-1$ and $p=0,$ then the upper bounded Hibi subring $R[p,q;L]$ has the defining ideal determined by a polyomino of one of the forms given in Figure~\ref{Polylinres}. Without loss of generality, we may assume that $I^{0,r-1}_L=I_{\MP}$ where ${\MP}$ is the polyomino consisting of the cells $[a,a+(1,1)]$, where $a\in\{(0,0),(1,0),\dots,(s-1,0)\}$ for some $s\geq 2$. Since $\text{rank}~L=q+1,$ $L$ contains exactly one of the cells $\alpha=[(s,0),(s,0)+(1,1)]$ and $\beta=[(s-1,1),(s-1,1)+(1,1)]$. If $L$ contains the cell $\alpha$, then $L$ is of the form $(i)$. Otherwise, $L$ contains the cell $\beta$. If $s>2,$ then we may choose the Hibi subring $R[1,\text{rank}~L;L]$ which does not have a linear resolution by \cite[Theorem 4.1]{EHH}. Hence $s=2$ and $L=\MI(P)$ is of one of the forms $(ii)$ and $(iii)$.

The converse is obvious.
\end{proof}

The above theorem shows that if $I_L$ has a linear resolution, then $I^{p,q}_L$ has a linear resolution as well for any
$0\leq p<q\leq\rank L.$ We are now interested to see whether the property of $I_L$ of being linearly related is inherited by all $I^{p,q}_{L}.$ The following example shows that this is not the case. Before discussing it, let us recall some facts. The ideal  $I_L$ is linearly related if its relation module, $\Syz_1(I_L)$ is generated only in degree $3.$ The planar lattices $L$ whose ideal $I_L$ is linearly related are characterized in \cite[Theorem 3.12]{En}:

\begin{Theorem}\cite{En}\label{linrel}
Let $L$ be a planar distributive lattice, $L\subseteq [(0,0),(m,n)]$ with $m,n\geq 2$. The ideal $I_L$ is linearly related if and only if  the following conditions hold:
\begin{itemize}
	\item [(i)] At most one of the vertices $(m,0)$ and $(0,n)$ does not belong to $L.$
	\item [(ii)] The vertices $(1,n-1)$ and $(m-1,1)$ belong to $L.$
\end{itemize}
\end{Theorem}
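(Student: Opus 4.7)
The plan is to analyze $I_L$ as the binomial ideal generated by the $2\times 2$ minors $y_{(i,j)}y_{(k,\ell)}-y_{(i,\ell)}y_{(k,j)}$, indexed by lattice rectangles all of whose four corners lie in $L$, and to track the first syzygy module combinatorially.

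For necessity, I would argue by contrapositive. If both $(m,0)$ and $(0,n)$ are missing from $L$, then one can locate two disjoint $2\times 2$ minors, one near each remaining extremal corner of $L$, sharing no variable and not joinable by any three-term Pl\"ucker-type linear syzygy within $L$; their trivial Koszul relation is then a minimal first syzygy in degree $4$. If instead $(1,n-1)\notin L$ (with a symmetric treatment for $(m-1,1)$), I would exhibit a generator involving $(0,n)$ whose available ``bridge variables'' in $L$ are too restricted, again forcing a minimal quartic syzygy. In each case, once the configuration is pinpointed, writing down the explicit syzygy and checking its minimality are routine.

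For sufficiency, assume (i) and (ii). I would show that the first syzygies of $I_L$ are generated by three-term Pl\"ucker-type relations of the form
$y_{(i,\ell)}\,f_{(i,j)(k,r)} - y_{(i,j)}\,f_{(i,\ell)(k,r)} + y_{(i,r)}\,f_{(i,j)(k,\ell)}$
and their transposes, each available whenever the requisite six lattice points all lie in $L$. The key step is to reduce an arbitrary Koszul-type relation between two generators to a combination of such linear syzygies via ``elementary moves'' that alter one row or column of the ambient rectangle at a time. Conditions (i) and (ii) enter as precisely the guarantee that the intermediate lattice points needed for these moves are present: (ii) populates the rows $j=n-1$ and the columns $i=1$ (and their symmetric counterparts), while (i) forbids the doubly-missing diagonal corners that would otherwise block the reduction. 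A natural organization is induction on the number of vertices of $[(0,0),(m,n)]$ absent from $L$, the base case being the full rectangle, whose Hibi ideal is the classical $2\times 2$-minor ideal and is known to be linearly related.

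The main obstacle lies in this last step, namely verifying that each admissible cell deletion preserves linear relatedness; one has to check that no Koszul-type relation becomes minimal after the deletion, and this requires careful bookkeeping of which Pl\"ucker triples survive. An alternative route is to invoke Theorem~\ref{GB}: choose a suitable monomial order whose initial ideal is generated by squarefree quadratic monomials, argue directly from the associated simplicial complex that $\beta_{1,j}(\ini I_L)=0$ for $j\geq 4$ exactly when (i) and (ii) hold, and transfer back via the standard inequality $\beta_{1,j}(I_L)\leq \beta_{1,j}(\ini I_L)$.
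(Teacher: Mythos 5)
First, note that the paper does not prove this statement at all: Theorem~\ref{linrel} is quoted from \cite{En} and used as a black box, so there is no internal proof to compare against. Judged on its own terms, your outline has the right overall architecture (quartic Koszul syzygies for necessity, Pl\"ucker-type linear syzygies for sufficiency), but it leaves the two genuinely hard points unaddressed. For necessity, exhibiting a degree-$4$ Koszul relation between two generators is easy; the actual content is showing that this relation is a \emph{minimal} generator of $\Syz_1(I_L)$, i.e.\ that it does not lie in the submodule generated by the degree-$3$ syzygies. Calling this ``routine'' hides the real work: in \cite{EHH} this is exactly the nontrivial equivalence (a)$\iff$(b) of Theorem~\ref{main}, established by computing multigraded Betti numbers via squarefree divisor complexes, not by direct inspection of the relation. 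Similarly, for sufficiency, the induction on deleted vertices is precisely the step you yourself flag as the main obstacle, and no mechanism is offered for carrying it out.

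Second, your fallback route is logically broken. The inequality $\beta_{1,j}(I_L)\leq\beta_{1,j}(\ini I_L)$ transfers \emph{vanishing} only from the initial ideal to $I_L$; it cannot show that $I_L$ fails to be linearly related when $\ini I_L$ does, so it can never deliver the ``only if'' direction, and the phrase ``exactly when (i) and (ii) hold'' is unjustified. A much shorter and complete argument is available inside this very paper: $I_L$ is the polyomino ideal of the convex polyomino whose vertex set is $L$; the corners $(0,0)$ and $(m,n)$ always belong to $L$ (they are the minimum and the maximum of the lattice), so the only corners that can be absent are the opposite pair $(m,0)$ and $(0,n)$. Theorem~\ref{main}(c) then reduces precisely to conditions (i) and (ii): the shape requirement of Figure~\ref{shape} forces $(1,n-1)$ and $(m-1,1)$ to be vertices, while the case analysis of missing corners rules out losing both $(m,0)$ and $(0,n)$ simultaneously. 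If you want a self-contained proof, specializing Theorem~\ref{main} is the route to take rather than rebuilding the syzygy analysis from scratch.
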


In \cite[Theorem 3.1]{EHH}, the polyominoes whose associated binomial ideals are linearly related are characterized. We include here the
complete statement for the convenience of the reader. Theorem~\ref{main} refers to Figure~\ref{shape}.  We  assume that $[(0,0),(m,n)]$ is the smallest interval with the property that $V(\MP)\subseteq [(0,0),(m,n)]$. Here $V(\MP)$ denotes the set of all the vertices of $\MP$. The elements $(0,0), (m,0), (0,n)$ and $(m,n)$ are the corners of $\MP$.
The corners $(0,0), (m,n)$ respectively $(m,0),(0,n)$ are called opposite corners.

\begin{figure}[hbt]
\begin{center}
\psset{unit=0.3cm}
\begin{pspicture}(6,-1)(6,12)
{
\pspolygon(0,2)(0,6)(1,6)(1,9)(4,9)(4,10)(10,10)(10,9)(12,9)(12,7)(13,7)(13,3)(12,3)(12,0)(11,0)(11,-1)(4,-1)(4,0)(1,0)(1,2)
}
\rput(-0.5,-0.2){$(1,1)$}
\rput(14.5,-0.2){$(m-1,1)$}
\rput(-1.4,9.2){$(1,n-1)$}
\rput(15.5,9.2){$(m-1,n-1)$}
\rput(1,0){$\bullet$}
\rput(1,9){$\bullet$}
\rput(12,9){$\bullet$}
\rput(12,0){$\bullet$}

\rput(4,-1.6){$i_1$}
\rput(11.2,-1.6){$i_2$}
\rput(4,10.7){$i_3$}
\rput(10.2,10.7){$i_4$}
\rput(-0.7,2.2){$j_1$}
\rput(-0.7,6.1){$j_2$}
\rput(13.7,6.8){$j_4$}
\rput(13.7,3){$j_3$}
\end{pspicture}
\end{center}
\caption{Possible shapes for a linearly related polyomino}\label{shape}
\end{figure}
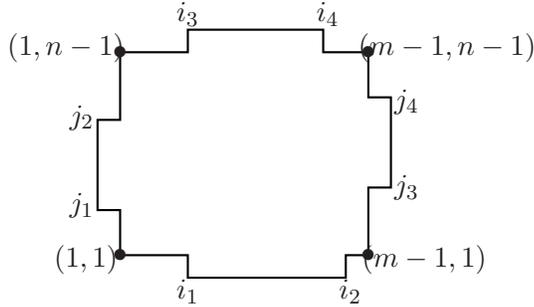

In Figure~\ref{shape}, the number $i_1$ is also allowed to be $0$ in which case also $j_1=0$. In this case, the polyomino contains the corner $(0,0)$. A similar  convention applies to the other corners. In Figure~\ref{shape}, all four corners $(0,0), (0,n), (m,0)$ and $(m,n)$ are missing from the polyomino.

\begin{Theorem}\cite{EHH}
\label{main}
Let $\MP$ be a convex polyomino. The following conditions are equivalent:
\begin{enumerate}
\item[{\em (a)}] $\MP$ is linearly related;
\item[{\em (b)}] $I_\MP$ admits no Koszul relation pairs;
\item[{\em (c)}] Let, as we may assume, $[(0,0),(m,n)]$ be the smallest interval with the property that $V(\MP)\subseteq [(0,0),(m,n)]$. Then $\MP$ has the shape as displayed in Figure~\ref{shape}, and one of the following conditions holds:
    \begin{enumerate}
    \item[{\em (i)}] at most one of the  corners does not belong to $V(\MP)$;
    \item[{\em (ii)}] two of the corners do not belong to $V(\MP)$,  but they are not opposite to each other. In other words, the missing corners are not the corners  $(0,0),(m,n)$ or the corners $(m,0),(0,n)$.
\item[{\em (iii)}] three of the corners do not belong to $V(\MP)$. If the missing corners are $(m,0)$, $(0,n)$ and $(m,n)$ (which one may assume without loss  of generality), then referring to Figure~\ref{shape} the following conditions must be satisfied: either $i_2=m-1$ and $j_4\leq j_2$, or $j_2=n-1$ and $i_4\leq i_2$.
    \end{enumerate}
\end{enumerate}
\end{Theorem}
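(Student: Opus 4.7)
My plan is to establish the equivalences in the chain $(a)\Leftrightarrow(b)\Leftrightarrow(c)$. The first equivalence is homological in nature. Recall that $I_\MP$ is generated by the quadratic binomials $f_\Delta = y_{ij}y_{k\ell}-y_{i\ell}y_{kj}$ indexed by inner intervals $\Delta=[(i,j),(k,\ell)]\subseteq\MP$. A Koszul relation pair is a pair of generators $f_\Delta,f_{\Delta'}$ whose supports as monomials in the polynomial ring are disjoint; such a pair contributes a minimal degree-$4$ syzygy $f_{\Delta'}e_\Delta - f_\Delta e_{\Delta'}$ that cannot be rewritten in terms of degree-$3$ syzygies, obstructing the linearly related property. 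Conversely, a Buchberger-style analysis of the quadratic Gröbner basis provided by Theorem~\ref{GB} shows that any overlap between the variable supports of two generators forces their S-pair to reduce through a linear syzygy, so in the absence of Koszul pairs all first syzygies are generated in degree $3$. This gives $(a)\Leftrightarrow(b)$.

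For $(b)\Leftrightarrow(c)$ I would first normalize so that $[(0,0),(m,n)]$ is the smallest bounding interval containing $V(\MP)$, and check that the row- and column-convexity inherent in a convex polyomino then forces its boundary to take the staircase form of Figure~\ref{shape}, parametrized by the step positions $i_1,\ldots,i_4$ and $j_1,\ldots,j_4$. The argument then splits according to how many of the four corners $(0,0),(m,0),(0,n),(m,n)$ belong to $V(\MP)$.

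The direction $(c)\Rightarrow(b)$ would proceed by checking in each subcase that any two inner intervals share at least one vertex, hence yield generators with overlapping supports. In case (i) a single surviving opposite corner forces every inner interval to reach a common row or column near that corner. In case (ii) the two surviving non-opposite corners occupy adjacent edges of the bounding box, and a pigeonhole-style argument on the available rows and columns rules out disjoint pairs. In case (iii), with only one corner surviving, the extra numerical constraints $i_2=m-1$ and $j_4\leq j_2$ (or their symmetric variants) are precisely what prevent two inner intervals from being strung disjointly along the two long sides of $\MP$.

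For the converse $(b)\Rightarrow(c)$ I would argue by contrapositive, exhibiting an explicit Koszul pair whenever $\MP$ fails to satisfy (c), in three scenarios: two opposite corners missing, three corners missing with the numerical conditions violated, or all four corners missing. In the first scenario, inner intervals localized near the two surviving corners automatically have disjoint supports. In the third, inner intervals at diagonally opposite ends of the staircase do the job. The main obstacle is the middle scenario: verifying sharpness of the inequalities in (iii) requires carefully selecting two inner intervals using the step positions $i_1,\ldots,i_4, j_1,\ldots,j_4$ and then using row- and column-convexity repeatedly to confirm that the constructed intervals genuinely lie inside $\MP$ while their four-vertex sets remain disjoint. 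I expect this delicate interplay between the combinatorial shape constraint and the numerical inequalities to be the technically hardest part of the proof.
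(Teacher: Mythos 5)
First, a point of order: the paper does not prove this statement. Theorem~\ref{main} is quoted verbatim from \cite{EHH} purely for the reader's convenience, so there is no internal proof to compare your attempt against. On its own terms, your overall architecture --- splitting into $(a)\Leftrightarrow(b)$ and $(b)\Leftrightarrow(c)$, normalizing the bounding interval, and running a case analysis on the number of missing corners --- does match the organization of the proof in \cite{EHH}.

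However, your argument for $(b)\Rightarrow(a)$ has a genuine gap. You assert that a ``Buchberger-style analysis'' of the quadratic Gr\"obner basis shows that whenever two generators have overlapping variable supports their S-pair reduces through a linear syzygy, so that in the absence of Koszul pairs the relation module is generated in degree $3$. A quadratic Gr\"obner basis yields Koszulness, not linear relatedness: the syzygies of the initial ideal only bound the Betti numbers of $I_\MP$ from above, and a squarefree quadratic monomial ideal can have minimal first syzygies in degree $4$ coming from generators with overlapping supports, so nothing forces the surviving degree-$4$ relations of $I_\MP$ to be exactly the disjoint-support Koszul ones. Moreover, you have quietly redefined ``Koszul relation pair'' as ``pair of generators with disjoint supports''; in \cite{EHH} a Koszul relation pair is by definition a pair whose Koszul relation is a \emph{minimal} generator of the relation module (making $(a)\Rightarrow(b)$ immediate), and deciding when disjointness of supports actually produces a minimal degree-$4$ relation is precisely where the numerical conditions of (c)(iii) enter --- if mere disjointness were the criterion, your sketch would never need them. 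The missing ingredient is the homological computation that drives the real proof: the multigraded Betti numbers of the toric ring are computed as reduced homology of squarefree divisor complexes, one shows the relation module of $I_\MP$ lives only in degrees $3$ and $4$, and one identifies the degree-$4$ minimal relations with Koszul relation pairs, which are then characterized combinatorially. Without supplying that input (or an equivalent), the equivalence $(a)\Leftrightarrow(b)$, and with it the theorem, is not established.
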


With all these tools at hand, we can move on to our study.

\begin{Example}\label{notlinearrel}{\em
Let $L$ be the planar distributive lattice whose poset of join-irreducible elements consists of two disjoint chains of lengths $m,$ respectively $n.$
Then, as a planar lattice, $L$ consists of all the lattice points $(i,j)$ with $0\leq i\leq m$ and $0\leq j\leq n. $ In
Figure~\ref{notlinrel}, we displayed such a lattice for $m=5$ and $n=4.$

\begin{figure}[hbt]
\begin{center}
\psset{unit=0.4cm}
\begin{pspicture}(0,0)(5,4)

\pspolygon(0,0)(5,0)(5,4)(0,4)
\psline(0,1)(5,1)
\psline(0,2)(5,2)
\psline(0,3)(5,3)
\psline(1,0)(1,4)
\psline(2,0)(2,4)
\psline(3,0)(3,4)
\psline(4,0)(4,4)
\psline[linestyle=dashed](0,3)(3,0)
\psline[linestyle=dashed](3,4)(5,2)
\end{pspicture}
\end{center}
\caption{ Bounded subrings which are not linearly related}
\label{notlinrel}
\end{figure}
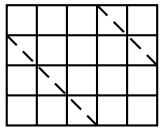

If $m,n\geq 2,$ then
the defining ideal of any rank bounded Hibi subring $R[p,q;L]$ with $0<p<q<\rank L=m+n$ is not linearly related. This is due to the fact that, for $0<p<q<\rank L$, the ideal $I^{p,q}_L$ is actually the ideal of a polyomino $\MP$ whose opposite corners are missing, thus
$\MP$ does not satisfy the conditions of Theorem~\ref{main}.

On the other hand, if one considers the upper-bounded subring $R[0,q;L]$ with $q=m+n-1$, its defining ideal is linearly related since the corresponding polyomino satisfies condition (i) in Theorem~\ref{main}.
 }
\end{Example}

The next theorem refers to Figure~\ref{shape2}. In this figure, the number $i_{2}$ is allowed to be $m$ in which case also $j_{2}=0$. In this case, the polyomino contains the corner $(m,0).$ A similar convention applies to the corner$(0,n).$
\begin{figure}[hbt]
\begin{center}
\psset{unit=0.3cm}
\begin{pspicture}(6,-1)(6,12)
{
\pspolygon(0,-1)(0,6)(1,6)(1,9)(4,9)(4,10)(13,10)(13,3)(12,3)(12,0)(11,0)(11,-1)(0,-1)
}
\rput(-1.5,-1.2){$(0,0)$}
\rput(14.5,-0.2){$(m-1,1)$}
\rput(-1.4,9.2){$(1,n-1)$}
\rput(14.7,10.2){$(m,n)$}
\rput(0,-1){$\bullet$}
\rput(1,9){$\bullet$}
\rput(13,10){$\bullet$}
\rput(12,0){$\bullet$}

\rput(11.2,-1.6){$i_2$}
\rput(4,10.7){$i_1$}
\rput(-0.7,6.1){$j_1$}
\rput(13.7,3){$j_2$}
\psline[linestyle=dashed](0,1)(2,-1)
\psline[linestyle=dashed](11,10)(13,8)
\end{pspicture}
\end{center}
\caption{}\label{shape2}
\end{figure}

\begin{Theorem}\label{linrelth}
Let $m,n\geq 2$ and $L\subseteq [(0,0),(m,n)]$ be a planar distributive lattice with the property that $I_L$ is linearly related. Let $p,q$ be integers such that $0\leq p<q\leq \rank L$. Then $I^{p,q}_L$ is linearly related as well if and only if one of the following conditions is satisfied:
\begin{itemize}
	\item [(a)] Both of the corners $(0,n)$ and $(m,0)$ belong to $L.$ In this case, $(p,q)$ may be any pair in the following set:

$\{(0,m+n-2),(0,m+n-1),(0,m+n),(1,m+n),(2,m+n)\}$
    \item [(b)] Exactly one of the corners $(0,n)$ and $(m,0)$ does not belong to $L.$ If the missing corner is $(0,n)$ (we can state conditions analogy to this when the other corner is missing), then referring to Figure~\ref{shape2}, $(p,q)$ may be any pair in the following sets:
\begin{itemize}
	\item [(1)] $\{(0,m+n)\}$
	\item [(2)] $\{(1,m+n),(2,m+n),(0,m+n-1),(0,m+n-2)\}$
    \item [(3)] $\{(1,m+n-1)~\text{if}~ j_{1}= n-1 ~\text{or} ~i_{1}= 1\}\cup\{(1,m+n-1),(1,m+n-2)~\text{if} ~j_{1}< n-1\}\cup\{(1,m+n-1),(2,m+n-1)~\text{if} ~i_{1}>1\}$
\end{itemize}
\end{itemize}
\end{Theorem}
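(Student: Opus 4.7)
The plan is to identify $I^{p,q}_L$ with a polyomino ideal via Remark~3.3 and then apply Theorem~\ref{main}. By that remark, $I^{p,q}_L = I_\MP T$ where $\MP$ is the convex polyomino consisting of the cells $[a,a+(1,1)]$ with $a=(i,j)\in L$, $a+(1,1)\in L$ and $p\leq i+j\leq q-2$. Thus $\MP$ is obtained from the polyomino $\MP_L$ of $L$ by carving a staircase notch of depth $p$ at the corner $(0,0)$ of the bounding box $[(0,0),(m,n)]$ (removing cells with $i+j<p$) and a staircase notch of depth $m+n-q$ at $(m,n)$ (removing cells with $i+j>q-2$). For $\MP$ to have the shape of Figure~\ref{shape}, which is a prerequisite for $I_\MP$ to be linearly related, both added notches must be at most two-step staircases; that is, $p\leq 2$ and $q\geq m+n-2$.

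In case (a), both $(0,n)$ and $(m,0)$ lie in $L$, so by the sublattice property $L=[(0,0),(m,n)]$ is the full rectangle and $\MP_L$ has all four corners in $V(\MP_L)$. The corner $(0,0)$ is missing from $V(\MP)$ exactly when $p\geq 1$, and $(m,n)$ is missing exactly when $q\leq m+n-1$. Since $(0,0)$ and $(m,n)$ are opposite corners, Theorem~\ref{main} forbids both from being absent simultaneously, so $p=0$ or $q=m+n$. Together with the depth bound, this gives exactly the five pairs listed in (a). In case (b), assume without loss of generality that the missing corner of $L$ is $(0,n)$. Then $\MP_L$ already carries a nontrivial notch at $(0,n)$ with parameters $(i_1,j_1)$ as in Figure~\ref{shape2}, while Theorem~\ref{linrel} together with the planarity of $L$ forces $(0,0), (m,0), (m,n)\in V(\MP_L)$. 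Rank-bounding may further remove $(0,0)$ and/or $(m,n)$, but must preserve $(m,0)$: otherwise the missing opposite pair $\{(0,n),(m,0)\}$ would violate Theorem~\ref{main}. According to how many of $(0,0), (m,n)$ are actually removed, one of conditions (i), (ii), (iii) of Theorem~\ref{main} applies, corresponding exactly to subcases (b)(1), (b)(2), (b)(3) of the statement.

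The main technical obstacle is subcase (b)(3), where three corners are missing. Reflecting $\MP$ horizontally to match the canonical configuration of Theorem~\ref{main}(iii) with missing corners $(m,0), (0,n), (m,n)$, the relevant dichotomy becomes ``$i_2=m-1$ with $j_4\leq j_2$, or $j_2=n-1$ with $i_4\leq i_2$''. Under the reflection, the rank-bounded notches of depth $p$ and $m+n-q$ contribute to two of the four parameter pairs in Figure~\ref{shape}, while the original $(0,n)$-notch of $L$ with parameters $(i_1,j_1)$ becomes the third. Testing the four candidate pairs $(1,m+n-1), (1,m+n-2), (2,m+n-1), (2,m+n-2)$ against this dichotomy, with $(i_1,j_1)$ free, produces exactly the set described in (b)(3). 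In particular $(2,m+n-2)$ is excluded for every admissible $L$, because the corresponding depths make both $i_2=m-2$ and $j_2=n-2$, so neither half of the dichotomy can hold regardless of $(i_1,j_1)$.
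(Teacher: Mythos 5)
Your proposal is correct and follows essentially the same route as the paper: identify $I^{p,q}_L$ with a polyomino ideal via Remark~3.4, observe that the rank bounds carve staircase notches at $(0,0)$ and $(m,n)$ forcing $p\leq 2$ and $q\geq m+n-2$, and then run the corner case analysis of Theorem~\ref{main}, with case (iii) handling the three-missing-corner situation. Your treatment of subcase (b)(3) -- reflecting to the canonical configuration and testing the four candidate pairs, in particular verifying that $(2,m+n-2)$ always fails the dichotomy -- is just a slightly more systematic organization of the same computation the paper performs by fixing $p=1$ or $q=m+n-1$ in turn.
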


\begin{proof}
Let $L$ be a planar distributive lattice such that $I_{L}$ is linearly related then $L$ is one of the following forms:

\begin{figure}[hbt]
\begin{center}
\psset{unit=0.4cm}
\begin{pspicture}(0,0)(14,3)
\pspolygon(0,0)(4,0)(4,3)(0,3)
\pspolygon(5,0)(5,1.7)(5.3,1.7)(5.3,2.7)(6.3,2.7)(6.3,3)(9,3)(9,0)(5,0)
\pspolygon(10,0)(10,3)(14,3)(14,1.3)(13.7,1.3)(13.7,0.3)(12.7,0.3)(12.7,0)(10,0)
\rput(5.3,2.7){$\bullet$}
\rput(13.7,0.3){$\bullet$}
\end{pspicture}
\end{center}
\caption{}
\label{Llinrel}
\end{figure}

Let $\MP$ be the convex polyomino such that $I_{\MP}$ is the polyomino ideal which corresponds to the defining ideal $I^{p,q}_{L}$ of the rank bounded Hibi subring $R[p,q;L]$.
As Theorem~\ref{main} states all the possible shapes of linearly related polyominoes, we can derive conditions for $p$ and $q$ by making use of this theorem. We know that $\MP$ should contain all the vertices $(1,1)$, $(m-1,1)$, $(m-1,n-1)$ and $(1,n-1)$ if $I_{\MP}$ is linearly related.

 Let $L$ be a lattice of the form as displayed on the left of Figure~\ref{Llinrel}. Then we do not have the choice to miss both of the corners $(0,0)$ and $(m,n)$ in the same time because they are opposite corners. If we miss $(0,0)$ then $q=m+n$ and we can take $0<p\leq 2$ and if we miss $(m,n)$ then we have $p=0$ and can take $m+n-2\leq q<m+n$.

 Let $L$ be a lattice of the form as displayed in the middle of Figure~\ref{Llinrel}, Then we have a few choices of the corners of $\MP$:

Case (1). We miss none of the corners $(0,0)$ and $(m,n).$ In this case, $p=0$ and $q=m+n$.

Case (2). We miss exactly one of the corners $(0,0)$ and $(m,n).$ If the missing corner is $(0,0)$, then $q=m+n$ and we have two choices for $p$, either $p=1$ or $p=2.$ We can not take $p>2$ because then $(1,1)$ will no longer be a vertex of $\MP$. In a similar way, if we miss only $(m,n)$ then $p=0$ and we have two choices for $q$, either $q=m+n-2$ or $q=m+n-1$. Again, we can not take $q<m+n-2$ because then $\MP$ will miss the vertex $(m-1,n-1)$ as well.

Case (3). We can miss even both of the corners $(0,0)$ and $(m,n).$ But in this case, we must put some extra conditions. The first choice is to fix $p=1$. Then referring to Figure~\ref{shape2}, if $j_{1}=n-1$ then we can take $q=m+n-1$ only. And if $j_{1}<n-1$ then we have again two choices for $q$, either $q=m+n-1$ or $q=m+n-2$ in order to assure that $j_{1}\leq q-m$ and that $(m-1,n-1)$ belongs to $V(\MP)$. The other choice is to fix $q=m+n-1$. Then in the similar way, if $i_{1}=1$ we can take $p=1$ only. And if $i_{1}>1$ then we have two choices for $p$, either $p=1$ or $p=2$ in order to assure that $i_{1}\geq p$ and that $(1,1)$ is a vertex of $\MP$.
\end{proof}

{}


\begin{thebibliography}{99}
\bibitem{Hibi} Hibi T., \textit{Distributive lattices, affine semigroup rings and algebras with straightening laws},
In: ``Commutative Algebra and Combinatorics'' (M.~Nagata and H.~Matsumura, Eds.),
Adv. Stud. Pure Math. {\bf 11}, North--Holland, Amsterdam, (1987), 93--109.

\bibitem{B} G.~Birkhoff, \textit{Lattice Theory} (3rd ed.), Amer. Math. Soc. Colloq. Publ. No. 25.
Providence, RI, 1967.

\bibitem{Sta1} R.~P.~Stanley, \textit{Enumerative combinatorics},   vol. 1, 2nd edition, Cambridge University Press, Cambridge, 1996.

\bibitem{Eis} D.~Eisenbud, {\em Introduction to algebras with straightening laws}, in ``Ring theory and algebra'', III: Proceedings of the third Oklahoma Conference, Lecture Notes in Pure and Applied Mathematics, {\bf 55} (1980), 243--268.

\bibitem{AHH} A.\ Aramova, J.\ Herzog, T.\ Hibi,  \textit{Finite lattices and lexicographic Gr\"obner bases}, European J. Combin. {\bf 21} (2000), 431--439.

\bibitem{EHH} V.~Ene, J.~Herzog, T.~Hibi, \textit{Linearly related polyominoes}, J. Algebraic Combin.  {\bf 41} (2015), 949-968.

\bibitem{EHHSara} V.~Ene, J.~Herzog, T.~Hibi, S.~Saeedi~Madani, \textit{Pseudo-Gorenstein and level Hibi rings}, J. Algebra, {\bf 431} (2015), 138-161.

\bibitem{EHSara} V.~Ene, J.~Herzog, S.~Saeedi~Madani, \textit{A note on the regularity of Hibi rings}, Manuscripta Math. {\bf 148} no. 3-4 (2015), 501-506.

\bibitem{ERQ} V.~Ene, A. A.~Qureshi, A.~Rauf, \textit{Regularity of join-meet ideals of distributive lattices}, Electron. J. Combin. {\bf 20} (3) (2013), \#P20.

\bibitem{HHNach} J.~Herzog, T.~Hibi, \textit{Finite lattices and Gr\"obner bases}, Math. Nachr. {\bf 285} (2012), 1969--1973.

\bibitem{Q2} A.\ Qureshi, \textit{Indispensable Hibi relations and Gr\"obner bases}, to appear in Algebra Colloq.

\bibitem{Hetal}  M. Bigdeli, J. Herzog, T.i Hibi, A. Shikama, {\em Isotonian Algebras}, to appear in Nagoya Math. J.

\bibitem{EHM} V.~Ene, J.~Herzog, F.~Mohammadi, \textit{Monomial ideals and toric rings of Hibi type arising from a
finite poset}, European J. Combin. \textbf{32} (2011), 404--421.

\bibitem{Q}  A.~Qureshi, \textit{Ideals generated by $2$-minors, collections of cells and stack polyominoes}, J. Algebra
{\bf 357} (2012), 279--303.

\bibitem{OH} H. Ohsugi, T. Hibi, \textit{Koszul bipartite graphs}, Adv. Appl. Math. {\bf 22} (1999), 25-28.

\bibitem {St95} B. Sturmfels, \textit{Gr\"obner Bases and Convex Polytopes}, Amer. Math. Soc., Providence, RI, 1995.

\bibitem {Ho72} M. Hochster, \textit{Rings of invariants of tori, Cohen-Macaulay rings generated by monomials, and polytopes}, Ann. of Math.
\textbf{96} (1972), 228--235.

\bibitem{EH} V.~Ene, J.~Herzog, \textit{Gr\"{o}bner bases in commutative algebra}, Graduate Studies in Mathematics {\bf 130}, AMS, (2012).








\bibitem{En} V. Ene, {\em Syzygies of Hibi rings}, Acta Math. Vietnam., Special Issue on: Commutative Algebra and its Interaction with Algebraic Geometry and Combinatorics II {\bf 40} (2015) no. 3, 403--446.








\end{thebibliography}
\end{document}